\theoremstyle{plain}
\newtheorem{theorem}{Theorem}
\newtheorem{lemma}{Lemma}
\theoremstyle{definition}
\newtheorem{remark}{Remark}
\date{}
\begin{document}

\title[Network codes]
{Any network codes comes from an algebraic curve taking osculating spaces}
\author{Edoardo Ballico}
\address{Dept. of Mathematics, University of Trento, 38123 Povo (TN), Italy.}
\email{ballico@science.unitn.it}
\thanks{Partially supported by MIUR and GNSAGA (Indam)}
\subjclass{14H50; 14N05; 94B27}
\keywords{linear network coding; algebraic curve; osculating space}

\maketitle

\begin{abstract}
In this note we prove that every network code over $\mathbb {F}_q$ may be realized taking some of the osculating spaces
of a smooth projective curve.
\end{abstract}

\section{Introduction}

In linear network coding informations go from one or more sources
to one of more targets in terms of a basis of a possible altered vector space
(\cite{lyc}, \cite{cwj03}, \cite{kk08}, \cite{tmr} and references therein). It is well-known that every linear code may be seen as a Goppa code over
a high genus curve (\cite{psv}). In this paper we prove that the same is true for network codes. We may also fix the curve $C$ with the only condition
that $\sharp (C(\mathbb {F}_q))$ is at least the number of subspaces of our network code. We need to say what is the equivalent
of a Goppa code in the set-up of network coding. We use the osculating codes introduced by J. Hansen in \cite{h1} and \cite{h2}. We do not need to restrict to constant dimension network coding.

Warning: in this paper we prove an existence theorem. We don't claim that it may be useful for the construction of network codes with good parameters.

Fix a finite field $\mathbb {F}_q$ and an integer $n>0$. Let $\mathcal {P}(\mathbb {F}_q^n)$
denote the set of all linear subspaces of the $\mathbb {F}_q$-vector space $\mathbb {F}_q^n$. A network code is just a non-empty subset $S\subseteq \mathcal {P}(\mathbb {F}_q^n)$.

For any vector space $W$ let $\mathbb {P}(W)$ denote the projective space of all lines of $W$ through $0$. In this note we prove the following result.

\begin{theorem}\label{a1}
Let $S\subseteq \mathcal {P}(\mathbb {F}_q^n)\setminus \{0\}$, $n\ge 3$, be any network code. Let $C$ be a smooth and geometrically connected
curve defined over $\mathbb {F}_q$ and such that $\sharp (C(\mathbb {F}_q)) \ge \sharp (S)$. Fix $A\subseteq C(\mathbb {F}_q)$ such that
$\sharp (A) =\sharp (S)$ and fix a bijection $u: A\to S$. For every $W\in S$ fix $Q_W\in \mathbb {P}(W) \subseteq \mathbb {P}(\mathbb {F}_q^n)$. Then there
exists a morphism $f: C \to \mathbb {P}^{n-1}$ defined over $\mathbb {F}_q$ such that $f(P) = Q_{u(P)}$ for every $P\in A$, $f$ has invertible differential
at each point of $A$ and $u(P)$ is the vector space whose associated projective space is the osculating space of dimension $\dim (u(P))-1$ to $(C,u)$ at $Q_{u(P)}$.
\end{theorem}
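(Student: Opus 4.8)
The plan is to realize $f$ as the morphism attached to a line bundle $L$ of large degree on $C$ together with $n$ global sections $s_0,\dots,s_{n-1}\in H^0(C,L)$ chosen so that their jets at the points of $A$ encode the prescribed data. I first recall the jet description of osculating spaces: if $f=[s_0:\cdots:s_{n-1}]$ has no base point, then after fixing a local parameter $t$ at a point $P\in C$ and a trivialization of $L$ there, and writing $s_i=\sum_{j\ge 0}a_{ij}t^j$ and $C_j=(a_{0j},\dots,a_{n-1,j})\in\mathbb{F}_q^{\,n}$, one has $f(P)=[C_0]$, while inspecting orders of vanishing at $P$ of pullbacks of hyperplanes shows that the osculating space of dimension $m$ to $(C,f)$ at $f(P)$ is $\mathbb{P}(\langle C_0,\dots,C_m\rangle)$; moreover $f$ has nonzero (equivalently, injective) differential at $P$ if and only if $C_0,C_1$ are linearly independent.

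For $P\in A$ put $W_P=u(P)$, $d_P=\dim W_P$. Pick $v_{P,0}\in W_P\setminus\{0\}$ with $[v_{P,0}]=Q_{W_P}$; if $d_P\ge 2$ extend it to a basis $v_{P,0},\dots,v_{P,d_P-1}$ of $W_P$, and if $d_P=1$ choose any $v_{P,1}\in\mathbb{F}_q^{\,n}\setminus W_P$. Let $e_P=\max(d_P,2)$, $Z=\sum_{P\in A}e_P\,P$, and fix an $\mathbb{F}_q$-rational effective divisor $D$ with $\operatorname{Supp}(D)\cap A=\emptyset$ and $\deg D$ large; set $L=\mathcal{O}_C(D)$. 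Once $\deg D-\deg Z\ge 2g-1$ we have $H^1(C,L(-Z))=0$, so the restriction $H^0(C,L)\to H^0(Z,L|_Z)=\bigoplus_{P\in A}L_P/\mathfrak m_P^{e_P}L_P$ is surjective over $\mathbb{F}_q$; hence the $n$-tuples $(s_0,\dots,s_{n-1})\in H^0(C,L)^n$ whose expansion at every $P\in A$ has $C_j=v_{P,j}$ for $0\le j<e_P$ form a non-empty coset of $H^0(C,L(-Z))^n$.

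The crux is to choose within this coset a tuple with no common zero, so that $f:=[s_0:\cdots:s_{n-1}]$ is a morphism on all of $C$ — automatically defined over $\mathbb{F}_q$, since the $s_i$ are. No common zero lies on $A$, because $C_0=v_{P,0}\ne 0$ there; so only closed points $x\notin A$ matter. I would bound the number of tuples in the coset whose components all vanish at some such $x$ by $\sum_{x\notin A}q^{\,n\dim_{\mathbb{F}_q}H^0(C,L(-Z-x))}$, and then use that $\dim H^0(L(-Z-x))=\dim H^0(L(-Z))-\deg x$ when $\deg x\le\deg L(-Z)-2g+1$ and is bounded by a constant depending only on $g$ otherwise, together with the estimate $\le q^d$ for the number of degree-$d$ closed points of $C$. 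Splitting the sum, the small-degree part is at most $q^{\,nN}/(q^{\,n-1}-1)$ with $N=\dim H^0(L(-Z))$, while the large-degree part is $o(q^{\,nN})$ as $\deg D\to\infty$; since $n\ge 3$ forces $q^{\,n-1}-1\ge 3$, the total is $<q^{\,nN}$, the number of tuples in the coset, once $\deg D$ is large enough — so a base-point-free tuple exists. This counting over $\mathbb{F}_q$ is the main difficulty, and is precisely where the hypothesis $n\ge 3$ is used.

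Finally, for this $f$ the three asserted properties are immediate from the jet description: at every $P\in A$ we get $f(P)=[v_{P,0}]=Q_{W_P}=Q_{u(P)}$; the differential of $f$ at $P$ is injective because $v_{P,0}$ and $v_{P,1}$ were chosen linearly independent in both cases; and since the $C_j=v_{P,j}$ for $j<d_P$ are linearly independent (so the osculating sequence of $f$ at $P$ begins $0,1,\dots,d_P-1$), the osculating space of dimension $d_P-1=\dim u(P)-1$ to $(C,f)$ at $f(P)$ equals $\mathbb{P}(\langle v_{P,0},\dots,v_{P,d_P-1}\rangle)=\mathbb{P}(W_P)$. Thus $u(P)=W_P$ is the vector subspace whose associated projective space is this osculating space, which completes the proof.
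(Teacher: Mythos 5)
Your route is genuinely different from the paper's and most of it is sound, but the base-point-freeness count contains a real error, and it occurs in exactly the regime the theorem is meant to cover. You bound the number of degree-$d$ closed points of $C$ by $q^d$; by Hasse--Weil the correct bound is $(q^d+1+2gq^{d/2})/d$, and the genus here is \emph{not} under control: the hypothesis $\sharp(C(\mathbb{F}_q))\ge \sharp(S)$ together with $\sharp(C(\mathbb{F}_q))\le q+1+2g\sqrt{q}$ forces $g$ to grow linearly with $\sharp(S)$, and $\sharp(S)$ can be as large as the number of all subspaces of $\mathbb{F}_q^n$ (roughly $q^{n^2/4}$). Already the degree-one term of your sum is $\sharp(C(\mathbb{F}_q))\cdot q^{n(N-1)}\ge \sharp(S)\cdot q^{n(N-1)}$, which exceeds $q^{nN}$ as soon as $\sharp(S)>q^n$; so the inequality ``total $<q^{nN}$'' fails and no base-point-free tuple is produced by this argument, no matter how large $\deg D$ is. Fortunately the entire step is unnecessary: a rational map from a smooth projective curve to $\mathbb{P}^{n-1}$ extends uniquely to a morphism (divide the $s_i$ by the canonical section of $\mathcal{O}_C(B)$, $B$ the common zero divisor), and since $v_{P,0}\ne 0$ there is no base point at any $P\in A$, so the extension agrees with $[s_0:\cdots:s_{n-1}]$ near $A$ and all your local computations (image point, differential, osculating flag) are untouched. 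With that replacement your proof is correct; in particular your verification that linear independence of $C_0,\dots,C_{d_P-1}$ forces the osculating orders $0,1,\dots,d_P-1$, and your $e_P=\max(d_P,2)$ device to force an injective differential when $\dim u(P)=1$, both match what is needed.

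For comparison, the paper proves the same vanishing $h^1(C,L(-Z))=0$ but uses it projectively rather than on jets: it embeds $C$ into $\mathbb{P}^r$ by the complete system $|L|$, deduces that the osculating spaces $A_P=\langle d_P w(P)\rangle$ have the expected dimensions and are linearly independent, and then (Lemmas \ref{a5} and \ref{a6}) produces a center $W$ with $W\cap A_P=\emptyset$ such that the linear projection $\ell_W$ carries each $A_P$ onto the prescribed $\mathbb{P}(u(P))$ and $w(P)$ onto $Q_{u(P)}$; the morphism is then $\ell_W\circ w$, and the same ``smooth curve, so the rational map extends'' fact disposes of possible points of $w(C)\cap W$ away from $A$. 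The two arguments are essentially dual --- prescribing the jets of the composed map is the same as prescribing where the projection sends the osculating subspaces --- and neither needs, nor can afford, a point count over $\mathbb{F}_q$.
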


See section \ref{S2} for the definition of osculating spaces to a curve. We also gives another property of the map $f$, which is related to the definition of osculating space (see Remark \ref{b2}).

\begin{remark}\label{b1}
The map $u\vert A$ is injective if and only if $Q_a\ne Q_b$ for all $a\ne b$. By Hall's marriage theorem (\cite{b}, page 6) we may find a set $\{Q_s\}_{s\in S}$, with $Q_s\in U_s$ for all $s$
and $Q_a\ne Q_b$ for all $a\ne b$ if and only if for all $S'\subseteq S$ we have $\sharp (\cup _{s\in S'} U_s(\mathbb {F}_q)) \ge \sharp (S')$. This condition
is satisfied for any $S$ which may be interesting for network coding.
\end{remark}

We don't know if (in the case $n\ge 4$ and when the condition of Remark \ref{b1} is satisfied) we may take as $f$ an embedding. Another promising tool would be to use Poonen's Bertini's theorem over a finite field.
However, we certainly cannot prescribe the curve $C$ using Bertini's theorem. As far as we know no statement in the literature would give that $S$ comes
from an  embedding of a smooth curve. Two statements in \cite{p1}, i.e. \cite{p1}, Theorem 1.2 and Corollary 3.4, are very interesting, but not enough (as far as we see) to prove that any network code for which
the condition of Remark \ref{b1} is satisfied arises as osculating spaces of a smooth curve.

\section{The proof}\label{S2}
For any subset $E$ of a projective space, let $\langle E\rangle$ denote its linear span. 

Let $C$ be a smooth and connected curve defined over an algebraically closed field $\mathbb {K}$. We recall the notion of osculating spaces to a curve in the algebraic set-up (\cite{l}, \cite{hkt}, Ch. 7).
Fix a morphism $u: C \to \mathbb {P}^r$ defined over $\mathbb {K}$,
an integer $x>0$
and $P\in C(\mathbb {K})$. Assume $a:= \dim (\langle C(\mathbb {K})\rangle ) >x$. The completion $\widehat{\mathcal {O}}_{C,P}$ of the local ring $\mathcal {O}_{C,P}$
is the ring $\mathbb {K}[[t]]$ of power series in one variable over $\mathbb {K}$. We fix a homogenous system of coordinates $x_0,\dots ,x_r$ on $\mathbb {P}^r$ such that $u(P) =(1:0:\dots :0)$ and
$\langle C(\mathbb {K})\rangle = \{x_i=0$ for all $i>a\}$. Hence $u$ induces $r$ power series $f_1(t),\dots ,f_r(t)$ with $f_i(0) =0$ for all $i$, $f_i\ne 0$ for all $i\le a$ and $f_i\equiv 0$ for all $i>a$. We may find a linear change
of coordinates such that $f_1(t), \dots ,f_a(t)$ are power series with increasing order of zero. In this new coordinate system (call it $w_0,\dots ,w_r$, but notice that it depends
from $P$) the $x$-dimensional osculating space to $(C,u)$ at $P$ is the $x$-dimensional linear subspace $\{w_{x+1}= \cdots =w_r =0\}$. Write $f_x(t) =ct^e +g(t)$
with $c\ne 0$ and each term in the power series $g(t)$ of order $\ge e+1$. The definition of osculating space implies $e\ge x+1$. We say that $(C,u)$ has
an ordinary $x$-osculation at $P$ if $e=x+1$. In our construction we may often obtain ordinary $(d_P-1)$-osculation at each $P\in A$ (see Remark \ref{b2}).

\begin{lemma}\label{a5}
Fix linear subspaces $A_i\subset \mathbb {P}^r$, $B_i\subset \mathbb {P}^r$, $1\le i \le e$, such that $\dim (A_i)=\dim (B_i)$
for all $i$ and $\dim (A_1+\cdots +A_e) = \dim (B_1+\cdots +B_e) = \dim (A_1)+\cdots +\dim (A_e)+e-1$. Fix $O_i\in A_i$
and $O'_i\in B_i$.
Then there is an automorphism $h: \mathbb {P}^r\to  \mathbb {P}^r$ such that $h(A_i)=B_i$ and $h(O_i)=O'_i$ for all $i$. If all
$A_i,$, $B_i$, $O_i$ and $O'_i$ are defined over $\mathbb {F}_q$, then we may find $h$ defined over $\mathbb
{F}_q$.
\end{lemma}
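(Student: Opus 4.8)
The plan is to reduce the whole statement to a routine piece of linear algebra over the ground field. Let $V$ be the $(r+1)$-dimensional vector space with $\mathbb{P}^r=\mathbb{P}(V)$, and for a linear subspace $M\subseteq\mathbb{P}^r$ write $\widetilde{M}\subseteq V$ for the vector subspace with $\mathbb{P}(\widetilde{M})=M$, so that $\dim(\widetilde{M})=\dim(M)+1$. Translating the hypothesis to vector spaces, the equality $\dim(A_1+\cdots+A_e)=\dim(A_1)+\cdots+\dim(A_e)+e-1$ becomes $\dim(\widetilde{A_1}+\cdots+\widetilde{A_e})=\dim(\widetilde{A_1})+\cdots+\dim(\widetilde{A_e})$, i.e. the sum $\widetilde{A_1}+\cdots+\widetilde{A_e}$ is direct; set $U:=\widetilde{A_1}\oplus\cdots\oplus\widetilde{A_e}$. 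The same hypothesis for the $B_i$, together with $\dim(B_i)=\dim(A_i)$, shows that $U':=\widetilde{B_1}\oplus\cdots\oplus\widetilde{B_e}$ is a direct sum and that $\dim(U')=\sum_i(\dim(A_i)+1)=\dim(U)$.

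First I would build the isomorphism on $U$. For each $i$ pick a nonzero vector $v_i\in\widetilde{A_i}$ representing the point $O_i$, and likewise $v'_i\in\widetilde{B_i}$ representing $O'_i$. Since $\dim(\widetilde{A_i})=\dim(\widetilde{B_i})$, complete $v_i$ to a basis of $\widetilde{A_i}$ and $v'_i$ to a basis of $\widetilde{B_i}$, and let $\phi_i:\widetilde{A_i}\to\widetilde{B_i}$ be the linear isomorphism matching these ordered bases; in particular $\phi_i(v_i)=v'_i$. Taking the direct sum gives a linear isomorphism $\phi:=\phi_1\oplus\cdots\oplus\phi_e:U\to U'$ with $\phi(\widetilde{A_i})=\widetilde{B_i}$ and $\phi(v_i)=v'_i$ for all $i$.

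Next I would extend $\phi$ to all of $V$. Choose a complement $W$ of $U$ in $V$ and a complement $W'$ of $U'$ in $V$; since $\dim(U)=\dim(U')$ we get $\dim(W)=\dim(W')$, so there is a linear isomorphism $\psi:W\to W'$, and $\Phi:=\phi\oplus\psi$ lies in $\mathrm{GL}(V)$. Let $h$ be the automorphism of $\mathbb{P}^r=\mathbb{P}(V)$ induced by $\Phi$. Then $h(A_i)=\mathbb{P}(\phi(\widetilde{A_i}))=\mathbb{P}(\widetilde{B_i})=B_i$, and since $\Phi(v_i)=v'_i$ with $v_i$ (resp. $v'_i$) representing $O_i$ (resp. $O'_i$), also $h(O_i)=O'_i$, for every $i$. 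For the last assertion, if all the data $A_i,B_i,O_i,O'_i$ are defined over $\mathbb{F}_q$ then $\widetilde{A_i},\widetilde{B_i}$ are $\mathbb{F}_q$-subspaces, the vectors $v_i,v'_i$, the bases completing them, and the complements $W,W'$ can all be chosen over $\mathbb{F}_q$; hence $\Phi\in\mathrm{GL}(V)(\mathbb{F}_q)$ and $h$ is defined over $\mathbb{F}_q$.

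I do not expect a genuine obstacle here: the content is entirely linear algebra once the dimension hypothesis is recognized as the statement that each of the two families of subspaces is in direct sum. The only points needing a little care are the bookkeeping between projective and affine dimensions and the degenerate cases (some $A_i$ a single point, so that $O_i$ is forced and $\widetilde{A_i}=\langle v_i\rangle$; or $U=V$, so that $W=W'=0$ and the extension step is vacuous), all of which are covered verbatim by the argument above.
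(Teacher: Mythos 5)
Your argument is correct and is essentially the paper's own proof, just written out in full: the paper also reads the dimension hypothesis as saying the underlying vector subspaces are in direct sum, builds the automorphism by matching bases of the $\widetilde{A_i}$ and $\widetilde{B_i}$ chosen to contain representatives of $O_i$ and $O'_i$, and extends to all of $V$ by completing to a basis, all of which works verbatim over $\mathbb{F}_q$. No substantive difference.
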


\begin{proof}
The condition ``~$\dim (A_1+\cdots +A_e) = \dim (A_1)+\cdots +\dim (A_e)+e-1$~'' says that $A_i\ne A_j$ for all $i\ne j$
and that all these subspaces are linearly independent. The same condition holds for the linear subspaces $B_i$, $1\le i \le e$. Hence the existence
of $h$ such that $h(A_i)=B_i$ just
says that any linear independent subset of a finite-dimensional vector space $V$ may be completed to a basis and that
any two basis of $V$ differ by a linear automorphism of $V$. To get also the condition $h(O_i)=O'_i$ for all $i$ it
is sufficient to take $O_i$ (resp. $O'_i$) as one of the elements of a minimal subset of $A_i$ (resp. $B_i$) spanning
$A_i$ (resp. $B_i$).
\end{proof}

For any linear subspace $W\subsetneq \mathbb {P}^r$ let $\ell _W: \mathbb {P}^r\setminus A\to \mathbb {P}^{r-\dim (W)-1}$
denote
the linear projection from $W$.

\begin{lemma}\label{a6}
Let $U_i\subset \mathbb {P}^{n-1}$, $1\le i \le e$, be finitely many non-empty linear subspaces (we allow the case in which
$U_i \subseteq U_j$ or $U_i=U_j$ for some $i\ne j$). Let $U\subseteq \mathbb {P}^{n-1}$ be the linear span
of $\cup _{i=1}^{e} U_i$. Set $k:= \dim (U)$, $N:= \sum _{i=1}^{e} \dim
(U_i)+e-1$ and take any integer
$r\ge \max \{N,k\}$. Fix $Q_i\in U_i$, $1\le i \le e$. Let $A_i\subset \mathbb {P}^r$, $1\le i \le e$, be linear subspaces such
that
$\dim (A_i)=\dim (A_i)$ for all $i$ and $\dim (A_1+\cdots +A_e) =N$. Fix $O_i\in A_i$. Then there is a linear subspace
$W\subset U$ such that $W\cap A_i=\emptyset$ for all $i$, $\dim (W) =r-k-1$, $\ell _W(A_i)=U_i$ and $\ell _W(O_i)=Q_i$ for all
$i$. If all $U_i$,
$A_i$, $Q_i$ and $O_i$ are  defined over $\mathbb {F}_q$, then $W$ is defined over $\mathbb {F}_q$.
\end{lemma}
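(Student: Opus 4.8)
The plan is to realize $\ell_W$ directly as the linear projection induced by a carefully chosen surjection of the underlying vector spaces. Write $\mathbb{P}^r=\mathbb{P}(V)$ with $\dim V=r+1$; for each $i$ let $\widehat{A}_i\subseteq V$ be the linear subspace with $\mathbb{P}(\widehat{A}_i)=A_i$ (so $\dim\widehat{A}_i=\dim A_i+1$), let $\widehat{U}$ be the $(k+1)$-dimensional vector space with $\mathbb{P}(\widehat{U})=U$, and let $\widehat{U}_i\subseteq\widehat{U}$ be the subspace with $\mathbb{P}(\widehat{U}_i)=U_i$. Since $\dim A_i=\dim U_i$ by hypothesis, $\dim\widehat{A}_i=\dim\widehat{U}_i$. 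Fix nonzero vectors $o_i\in\widehat{A}_i$ and $q_i\in\widehat{U}_i$ representing $O_i$ and $Q_i$. It suffices to produce a surjective linear map $\pi:V\to\widehat{U}$ such that $\pi(\widehat{A}_i)=\widehat{U}_i$, $\pi|_{\widehat{A}_i}$ is injective, and $\pi(o_i)\in\mathbb{K}q_i$, for every $i$. Indeed, then $W:=\mathbb{P}(\ker\pi)\subset\mathbb{P}^r$ has $\dim W=((r+1)-(k+1))-1=r-k-1$, the projection from $W$ is the map $\mathbb{P}^r\setminus W\to\mathbb{P}(\widehat{U})=U$ induced by $\pi$ (its target having dimension $k=r-\dim W-1$), and the three conditions on $\pi$ give, respectively, $W\cap A_i=\emptyset$, $\ell_W(A_i)=U_i$, and $\ell_W(O_i)=Q_i$.

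The only structural fact needed is that $M:=\widehat{A}_1+\cdots+\widehat{A}_e$ is a direct sum. This is just a restatement of the hypothesis: $\dim(A_1+\cdots+A_e)=N=\sum_i\dim A_i+e-1$ means $\dim M=N+1=\sum_i(\dim A_i+1)=\sum_i\dim\widehat{A}_i$, which forces $M=\widehat{A}_1\oplus\cdots\oplus\widehat{A}_e$. On the target side I will only use that $\widehat{U}=\widehat{U}_1+\cdots+\widehat{U}_e$, which holds because $U$ is the linear span of $\bigcup_iU_i$; here the sum need not be direct, and that is harmless.

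To build $\pi$: for each $i$ choose a linear isomorphism $\phi_i:\widehat{A}_i\to\widehat{U}_i$ with $\phi_i(o_i)=q_i$ (possible since the two spaces have the same dimension and $o_i$, resp.\ $q_i$, may be extended to a basis of $\widehat{A}_i$, resp.\ $\widehat{U}_i$). Because the sum defining $M$ is direct, the assignment $v_1+\cdots+v_e\mapsto\phi_1(v_1)+\cdots+\phi_e(v_e)$ ($v_i\in\widehat{A}_i$) is a well-defined linear map $\pi_0:M\to\widehat{U}$ with $\pi_0|_{\widehat{A}_i}=\phi_i$; it is surjective, since its image contains every $\widehat{U}_i$, and each $\pi_0|_{\widehat{A}_i}$ is injective. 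Extend $\pi_0$ to $\pi:V\to\widehat{U}$ by picking a linear complement $M'$ of $M$ in $V$ and setting $\pi|_{M'}=0$ (if $r=N$ then $M=V$ and no extension is needed). Then $\pi$ is surjective, $\ker\pi\cap\widehat{A}_i=\ker\phi_i=0$, $\pi(\widehat{A}_i)=\widehat{U}_i$ and $\pi(o_i)=q_i$ for all $i$, so $W:=\mathbb{P}(\ker\pi)$ has all the required properties. For the last clause: if $U_i$, $A_i$, $Q_i$, $O_i$ are defined over $\mathbb{F}_q$, then the $\widehat{A}_i$ and $\widehat{U}_i$ are $\mathbb{F}_q$-subspaces, and the vectors $o_i$, $q_i$, the chosen bases, the isomorphisms $\phi_i$, and the complement $M'$ may all be taken over $\mathbb{F}_q$; hence $\pi$, and therefore $W=\mathbb{P}(\ker\pi)$, is defined over $\mathbb{F}_q$.

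I do not expect a genuine obstacle here: the proof is elementary linear algebra, its one real point being the observation that the ``general position'' hypothesis on the $A_i$ is exactly equivalent to $\widehat{A}_1+\cdots+\widehat{A}_e$ being a direct sum, which is what permits prescribing $\pi$ summand by summand. (One could instead first normalize the $A_i$ via Lemma \ref{a5} and then project, but that merely wraps an extra layer around the same construction.) The only point needing care is the reading of ``$\ell_W(A_i)=U_i$'': the target of $\ell_W$ is an abstract $\mathbb{P}^{r-\dim W-1}$, and the claim is that its identification with a $\mathbb{P}^k\supseteq U$ may be chosen so that $\ell_W$ is the map induced by the $\pi$ constructed above; this is harmless, since two projections from the same center differ only by an automorphism of the target.
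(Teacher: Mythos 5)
Your proof is correct, and it takes a more elementary route than the paper's. The paper forms the abstract scheme $E=V_1\sqcup \cdots \sqcup V_e$ with $V_i\cong U_i$, notes that $\mathcal {L}=v^\ast (\mathcal {O}_U(1))$ is very ample on $E$ with $h^0(E,\mathcal {L})=N+1$, embeds $E$ into $\mathbb {P}^N\subseteq \mathbb {P}^r$ by the complete linear system $\vert \mathcal {L}\vert$, moves this embedded copy onto the given $A_i$ (with marked points onto the $O_i$) by the automorphism supplied by Lemma \ref{a5}, and then invokes the fact that a map given by a linear subsystem of $\vert \mathcal {L}\vert$ factors as the complete embedding followed by a linear projection; the center of that projection is $W$. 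You instead build the projection by hand: you observe that the hypothesis $\dim (A_1+\cdots +A_e)=N$ is exactly the statement that $\widehat{A}_1+\cdots +\widehat{A}_e$ is a direct sum of vector spaces, define $\pi$ summand by summand via isomorphisms $\phi _i:\widehat{A}_i\to \widehat{U}_i$ sending $o_i$ to $q_i$, extend by zero on a complement, and set $W=\mathbb {P}(\ker \pi )$. The two arguments are dual descriptions of the same construction (your $M$ plays the role of $H^0(E,\mathcal {L})^\ast$, and $\pi \vert M$ is dual to the inclusion of $v^\ast H^0(U,\mathcal {O}_U(1))$ into $H^0(E,\mathcal {L})$), but yours avoids the scheme-theoretic packaging and the appeal to Lemma \ref{a5}, and it makes the verification of $W\cap A_i=\emptyset$ transparent, since $\pi \vert \widehat{A}_i=\phi _i$ is injective; the rationality over $\mathbb {F}_q$ is equally immediate in both versions. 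You also silently (and correctly) repair the two typos in the statement, reading $\dim (A_i)=\dim (U_i)$ and $W\subset \mathbb {P}^r$ rather than $W\subset U$, which is consistent with the paper's own proof and with how the lemma is used.
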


\begin{proof}
Let $E:= V_1\sqcup \cdots \sqcup V_e$ be the algebraic scheme with $e$ connected components $V_1,\dots ,V_e$ with
$V_i =U_i$ as an abstract scheme. The inclusion $\cup U_i\subset \mathbb {P}^{r-1}$ induces
a morphism $v: E \to U$ with $v(V_i)=U_i$ for all $i$ and $v\vert V_i: V_i\to U_i$ an isomorphism. Let $Q'_i\in V_i$ be the
only point such that $v(Q'_i)=Q_i$.
Set $\mathcal {L}:= v^\ast (\mathcal {O}_U(1))$. Since each $U_i$ is a linear subspace of $U$
and $V_i\cap V_j=\emptyset$ for all $i\ne j$, $\mathcal {L}$ is a very ample line bundle on $E$ and $h^0(E,\mathcal {L})=N+1$.
Let $w: E\hookrightarrow \mathbb {P}^N$ be the embedding of $E$ induced by the complete linear system
$\vert \mathcal {L}\vert$. Up to an automorphism of $\mathbb {P}^r$ we may assume that
$A_i = w(V_i)$ and $O_i =w(Q'_i)$ for all $i$ (Lemma \ref{a5}). Since $v$ is induced by a
subspace of
$\vert
\mathcal {L}\vert$, there is a linear subspace $W\subset \mathbb {P}^r$ such that $\dim (W)=r-k-1$ and $v = \ell _A\circ w$,
i.e. $U_i =\ell _A(A_i)$ for all $i$.
Since $\dim (U_i) =\dim (V_i)$, we have $W\cap U_i=\emptyset$ for all $i$. Since $w(Q'_i)=O_i$ and $v(Q'_i)=Q_i$, we
have $\ell _W(O_i)=Q_i$.
If all linear
spaces
$U_i$ and
$A_i$ are defined over a field
$K$, $Q_i\in U_i(K)$ and $O_i\in A_i(K)$, then the construction works over
$K$ and hence $W$ is defined
over $K$. 
\end{proof}

\begin{lemma}\label{a7}
Let $g\ge 0$ be the genus of $C$. For any $P\in A$ set $d_P:= \dim (u(P))+1$. Set $\delta := \sum _{P\in A} d_P$.
Fix any line bundle $L$ on $C$ defined over $\mathbb {F}_q$ and such that $\deg (L) \ge \max \{2g+1,2g-1 +\delta \}$.
Let $w: C \hookrightarrow \mathbb {P}^r$, $r:= \deg (L)-1$, be the embedding induced
by the complete linear system $\vert L\vert$. Each degree $d_P$ divisor $d_Pw(P)\subset w(C)$
spans a linear subspace $A_P$ defined over $\mathbb {F}_q$, $\dim (A_P) =d_P-1$ for all $P$
and $\langle \cup _{P\in A} A_P\rangle$ is a linear subspace of dimension $\delta$.  
\end{lemma}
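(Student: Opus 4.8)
The plan is to read every dimension in the statement off Riemann--Roch, applied to $L$ and to its twists by the divisors involved. Since $\deg(L)\ge 2g+1>2g-1$ we have $h^1(C,L)=0$, so $L$ is very ample, $w$ realizes $C$ as a linearly normal curve in $\mathbb{P}^r$ with $r=h^0(C,L)-1=\deg(L)-g$, and (the linear system being complete) the linear forms on $\mathbb{P}^r$ are identified via $w^\ast$ with $H^0(C,L)$. The one general principle I would invoke is that for such a linearly normal embedding a hyperplane of $\mathbb{P}^r$ contains the length-$\deg(D)$ subscheme $w(D)$ cut out by an effective divisor $D$ on $C$ if and only if the corresponding section lies in $H^0(C,L(-D))\subseteq H^0(C,L)$ (here one uses that $w$ is a closed embedding, so $w(D)$ is defined by the ideal whose pullback is $\mathcal{O}_C(-D)$). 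Intersecting all such hyperplanes then gives
\[
\dim\langle w(D)\rangle \;=\; r-h^0(C,L(-D)).
\]

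I would apply this twice. First with $D=d_PP$ for a fixed $P\in A$: from $\deg(L)\ge 2g-1+\delta\ge 2g-1+d_P$ we get $\deg(L(-d_PP))\ge 2g-1$, hence $h^1=0$ and $h^0(C,L(-d_PP))=\deg(L)-d_P+1-g$, so
\[
\dim A_P=\dim\langle d_Pw(P)\rangle=(\deg(L)-g)-(\deg(L)-d_P+1-g)=d_P-1 .
\]
Second with $D=\sum_{P\in A}d_PP$, of degree $\delta$: the hypothesis $\deg(L)\ge 2g-1+\delta$ gives $\deg(L(-D))\ge 2g-1$, hence $h^0(C,L(-D))=\deg(L)-\delta+1-g$ and $\dim\langle w(D)\rangle=(\deg(L)-g)-(\deg(L)-\delta+1-g)=\delta-1$. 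Finally $\langle\cup_{P\in A}A_P\rangle=\langle\cup_{P\in A}d_Pw(P)\rangle=\langle w(D)\rangle$, since each $A_P$ is by definition the span of the fat point $d_Pw(P)$, so the span of the union of the $A_P$ equals the span of the union of the fat points; this yields the asserted dimension of $\langle\cup_{P\in A}A_P\rangle$.

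For the field of definition: $C$, $L$ and $w$ are defined over $\mathbb{F}_q$ and every $P\in A$ is $\mathbb{F}_q$-rational, so the divisors $d_PP$ and $D=\sum_P d_PP$ are defined over $\mathbb{F}_q$; hence the subspaces $H^0(C,L(-d_PP))$ and $H^0(C,L(-D))$ of $H^0(C,L)$ are $\mathrm{Gal}(\overline{\mathbb{F}}_q/\mathbb{F}_q)$-stable, and therefore so are their common zero loci $A_P$ and $\langle w(D)\rangle$, which are thus defined over $\mathbb{F}_q$. I do not anticipate a genuine obstacle in this lemma: the whole content is the span formula above together with the vanishing $h^1(C,L(-D))=0$, and the two degree bounds $2g+1$ and $2g-1+\delta$ in the hypothesis are exactly what make the embedding and that vanishing available; the only point deserving a line of care is the scheme-theoretic identification of ``hyperplanes through the fat point $d_Pw(P)$'' with ``sections of $L$ vanishing to order $\ge d_P$ at $P$''.
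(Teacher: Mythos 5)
Your proof is correct and follows essentially the same route as the paper: very ampleness from $\deg(L)\ge 2g+1$, the vanishing $h^1(C,L(-D))=0$ from $\deg(L(-D))\ge 2g-1$, and Riemann--Roch to read off $\dim\langle w(D)\rangle=\deg(D)-1$, applied first to each $d_PP$ and then to $\sum_P d_PP$ (you merely supply more detail on the hyperplane--section identification and the Galois descent). Note that your computations silently correct two slips in the lemma's statement --- $r$ should be $\deg(L)-g$, not $\deg(L)-1$, and the final span has dimension $\delta-1$, not $\delta$ --- and your values agree with what the paper's own proof actually establishes.
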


\begin{proof}
$L$ is very ample, because $\deg (L)\ge 2g+1$. Fix an effective divisor $D\subset C$ such that $\deg (D)\le \delta$.
Since $\deg (L(-D)) \ge 2g-1$, we have $h^1(C,L(-D))=0$. Riemann-Roch gives
$h^0(C,L(-D)) = h^0(C,L) -\deg (D)$, i.e. $\dim (\langle w(D)\rangle )=\deg (D)-1$. Apply this observation
first to each divisor $d_PP$ and then to the divisor $\sum _{P\in A} d_PP$.
\end{proof}

\vspace{0.3cm}

\qquad {\emph {Proof of Theorem \ref{a1}.}} For each $P\in A$ set $\eta _P:= \max \{d_P,2\}$. Set $\eta := \sum _{P\in A} \eta _P$. Let
$L$ be a line bundle of degree $\ge \max \{2g+1,2g-1+\eta \}$ defined over $\mathbb {F}_q$.
Let $w: C \to \mathbb {P}^r$, $r:= \deg (L)-g$, denote the map associated to the complete linear system $\vert L\vert$.
Lemma \ref{a6} gives that $w$ is an embedding, that each linear space $A_P:= \langle d_Pw(P)\rangle$ has
dimension $d_P-1$ and that the suspaces $A_p$, $P\in A$, are linearly independent. Lemma \ref{a5} gives
the existence of a linear subspace $W\subset \mathbb {P}^r$ such that $\dim (W)=r-k-1$, $W\cap A_P=\emptyset$
for all $P$ and $u(P) = \ell _W(A_P)$ for all $P$. Since $\dim (A_p) = d_P-1$, $A_P$ is the
osculating space of dimension $d_P-1$ to $w(C)$ at $w(P)$.
Take $s =u(P)\in S$ such that $\dim (U_s) >0$. Since $A_P$ contains the tangent line to $w(C)$ at $w(P)$ and $W\cap A_P
=\emptyset$, $\ell
_W$ induces an embedding of $w(C)$ into $\mathbb {P}^{n-1}$ in a neighborhood of $w(P)$, i.e. we are computing the osculating
space with respect to a smooth branch of $u(C)$. This is not necessarily true at the points $s = u(P)$ such that $\dim
(U_s)=0$.
We do not need any smoothness to compute $u(P)$, but in the statement of Theorem \ref{a1} we claimed that we may find $u$
which is unramified at each point of $A$, i.e. that the differential of $u$ is invertible at each point of $A$. Set $S':=
\{s\in S:
\dim (U_s)>0\}$. For each
$s\in S\setminus S'$ choose any line $\widetilde{U}_s \subset \mathbb {P}^{n-1}$ defined over $\mathbb {F}_q$ and containing the
point
$U_s$. Set $S'':= S'\cup \bigcup _{s\in S\setminus S'} \widetilde{U}_s$ in which each line $\widetilde{U}_s$ has the point
$U_s$ as its prescribed $u(P) =Q_s$ (i.e. we use $\eta _P$ instead of $d_P$ and $\eta$ instead of $\delta$). If
$\widetilde{U}_s = U_a$
for some $s\in S\setminus S'$ and some $a\in S'$ or if $\widetilde{U}_a =\widetilde{U}_b$ for some $a\ne b$, then we count twice or more the linear space $\widetilde{U}_a$ (indeed, in Lemma \ref{a6} we allowed that some of the subspaces coincide). We chose
$Q_s$ as the point of $\widetilde{U}_s$. With this convention
the new map $u$ is unramified
at each point of $A$.\qed

\begin{remark}\label{b2}
Assume $\dim (U_s) \le n-3$ for all $s\in S$. For each $s\in S$ take a linear subspace $\overline{U}_s\subseteq \mathbb {F}_q^n$ such that
$\overline{U}_s\supset U_s$ and $\dim (\overline{U}_s) =\dim (U_s)+1$. Write $\overline{\delta }:= \delta +\sharp (S)$, i.e. write
$\overline{\delta}:= \sum _{s\in S} (\dim (\overline{U}_s)+1)$. Take $Q_s$ as the prescribed point of $\overline{U}_s$. Let $\overline{S}$ be the family of linear subspaces of
$\mathbb {F}_q^n$ counting them several times if we take $\overline{U}_a = \overline{U}_b$ for some $a\ne b$. Do the construction used to prove Theorem \ref{a1} using the family $\overline{S}$ with the points $\{Q_s\}_{s\in S}$. If $s = u(P)$, then $U_s$ is associated to the divisor $d_PP$, while $\overline{U}_s$
is associated to the divisor $(d_P+1)P$. Since $U_s\subsetneq \overline{U}_s$, we get that $(C,u)$ has an ordinary $(d_P-1)$-ramification at each $P\in A$.
\end{remark}

\providecommand{\bysame}{\leavevmode\hbox to3em{\hrulefill}\thinspace}

\end{document}